\newcommand{\Mod}[1]{\ (\text{mod}\ #1)}
\renewcommand{\Mod}[1]{{\ifmmode\text{\rm\ (mod~$#1$)}\else\discretionary{}{}{\hbox{ }}\rm(mod~$#1$)\fi}}
\newtheorem{theorem}{Theorem}[section]
\newtheorem{lemma}[theorem]{Lemma}
\newtheorem{cor}[theorem]{Corollary}
\newtheorem{claim}[theorem]{Claim}
\theoremstyle{definition} % changes the formatting of the environments defined below
\numberwithin{theorem}{section}
\begin{document}

\title{Additive structure in convex translates }
\author{Gabriel Currier}
\address{Department of Mathematics \\ University of British Columbia \\ Room 121, 1984 Mathematics Road \\ Vancouver, BC, Canada V6T 1Z2}
\email{currierg@math.ubc.ca}
 \author{Jozsef Solymosi}
\address{Department of Mathematics \\ University of British Columbia \\ Room 121, 1984 Mathematics Road \\ Vancouver, BC, Canada V6T 1Z2}
\email{solymosi@math.ubc.ca}
\author{Ethan Patrick White}
\address{Department of Mathematics \\ University of British Columbia \\ Room 121, 1984 Mathematics Road \\ Vancouver, BC, Canada V6T 1Z2}
\email{epwhite@math.ubc.ca}

\maketitle

\begin{abstract} Let $\mathcal{P}$ be a set of points in the plane, and $\mathcal{S}$ a strictly convex set of points. In this note, we show that if $\mathcal{P}$ contains many translates of $\mathcal{S}$, then these translates must come from a generalized arithmetic progression of low dimension. We also discuss an application to the unit distance conjecture.

%Let $\mathcal{P}$ be a set of points in the plane, $\mathcal{S}$ a strictly convex set of points, and let $\mathcal{T}$ be a collection of translation vectors. In this note, we show that if $\mathcal{P}$ contains many translates of $\mathcal{S}$ along vectors from $\mathcal{T}$, then a positive fraction of $\mathcal{T}$ is contained in a generalized arithmetic progression of low dimension. We also discuss an application to the unit distance conjecture.

%In this note we show that if a pointset in the plane contains many translated copies of a strictly convex pointset, then a positive proportion of the pointset is contained in a generalized arithmetic progression of low dimension. 
\end{abstract}

\section{Introduction} 

Suppose we have a set of $n$ points and $n$ translates of a strictly convex curve. In general, it is known that such collections of points and curves can have only $\lesssim n^{4/3}$ incidences\footnote{Throughout the paper we use the notation $A\gtrsim B$ to mean there exists a universal constant $C>0$ such that $A \geq CB$}~\cite{PASH}. Most of the convex curves cannot achieve this bound \cite{Ma}, but there are examples where they can~\cite{Ja,SSz,VAL}.\\

The standard constructions for these examples proceed as follows: we construct a curve that has $n^{1/3}$ incidences with a grid, and then take translates of that curve by the set of vectors determined by that grid. In this situation, all of our incidences come from $n^{1/3}$ fixed points along a curve, and our translates come from a generalized arithmetic progression (in this case, of dimension $2$).\\

Our main result shows that the first of these conditions implies the second. That is, if our incidences come from $n^{1/3}$ fixed points along a curve, then our translates \emph{must} come from a generalized arithmetic progression of bounded dimension.

\begin{theorem}\label{MT} Let $\mathcal{P}$ be a set of $n$ points in $\mathbb{R}^2$, $\mathcal{S}$ be a strictly convex set of $\lesssim n^{1/3}$ points, and $\mathcal{T}$ be a set of $\lesssim n$ vectors in $\mathbb{R}^2$. If $\mathcal{S} + \mathcal{T}$ intersects $\mathcal{P}$ in $\gtrsim n^{4/3}$ points, then there is a subset $\mathcal{T}'\subset \mathcal{T}$ of size $\gtrsim n$ contained in a generalized arithmetic progression of dimension $ \lesssim 1$ and size $\lesssim n$.

\end{theorem}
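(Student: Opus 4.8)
The plan is to exploit the fact that $\mathcal{S}$ is strictly convex, so two distinct points $s_1, s_2 \in \mathcal{S}$ have a unique direction $s_1 - s_2$, and more importantly, for a strictly convex set, the difference set $\mathcal{S} - \mathcal{S}$ has all $\sim |\mathcal{S}|^2$ differences distinct and in ``convex position'' in a suitable sense. First I would pass to a large subset of translates that behave regularly: since $\mathcal{S} + \mathcal{T}$ meets $\mathcal{P}$ in $\gtrsim n^{4/3}$ points and $|\mathcal{S}| \lesssim n^{1/3}$, $|\mathcal{T}| \lesssim n$, an averaging argument shows that $\gtrsim n$ of the translates $t \in \mathcal{T}$ have the property that $|(\mathcal{S}+t)\cap \mathcal{P}| \gtrsim n^{1/3}$, i.e. a positive proportion of the copy $\mathcal{S}+t$ lands in $\mathcal{P}$. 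Call this set $\mathcal{T}'$; it has size $\gtrsim n$, and for each $t \in \mathcal{T}'$ there is a subset $\mathcal{S}_t \subseteq \mathcal{S}$ of size $\gtrsim n^{1/3}$ with $\mathcal{S}_t + t \subseteq \mathcal{P}$.

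The key step is an energy/additive-combinatorics argument. Fix one distinguished element $t_0 \in \mathcal{T}'$. For any $t \in \mathcal{T}'$, the sets $\mathcal{S}_{t_0} + t_0$ and $\mathcal{S}_t + t$ are both large subsets of $\mathcal{P}$, each of size $\gtrsim n^{1/3}$ inside a convex curve translate. The heart of the matter is to show that $\mathcal{T}'$ (or a further large refinement of it) has \emph{very small doubling}, or more precisely that $\mathcal{T}' - \mathcal{T}'$ is contained in a small set, which by Freiman's theorem forces a bounded-dimension GAP. To get the small doubling I would count, for pairs $t, t' \in \mathcal{T}'$, the quantity $t - t'$ and show it must be a difference $s' - s$ of two points of $\mathcal{S}$ ``most of the time'' — because $\mathcal{S}_t + t$ and $\mathcal{S}_{t'} + t'$ are two nearly-full convex copies sitting inside $\mathcal{P}$, and the structure of $\mathcal{P} \cap (\text{convex translate})$ cannot be too rich. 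Concretely, the number of incidences being extremal ($\sim n^{4/3}$, matching the Pach–Sharir bound) is rigid: it should force that the common refinement of the point sets forces $t - t' \in \mathcal{S} - \mathcal{S}$ up to a controlled error, so that $\mathcal{T}' \subseteq \mathcal{S} - \mathcal{S} + t_0$ after discarding few elements. Since $\mathcal{S}$ is strictly convex, $\mathcal{S} - \mathcal{S}$ has size $\lesssim n^{2/3}$ but more importantly lies on a convex curve, and a set of $\gtrsim n$ points... wait — that cannot be, since $|\mathcal{S}-\mathcal{S}| \lesssim n^{2/3} < n$. So instead the right statement is that a \emph{positive proportion} of the pairwise differences of $\mathcal{T}'$ lie in a translate of $\mathcal{S}-\mathcal{S}$, giving high additive energy $E(\mathcal{T}') \gtrsim |\mathcal{T}'|^3 / |\mathcal{S}-\mathcal{S}|^{?}$; then I would invoke the Balog–Szemerédi–Gowers theorem to extract a subset $\mathcal{T}''$ with small doubling, and finally Freiman's theorem to place $\mathcal{T}''$ in a GAP of bounded dimension and size $\lesssim n$. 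The dimension bound being ``$\lesssim 1$'' (i.e. a genuine arithmetic progression, up to constants) rather than just ``bounded'' suggests that the convexity of $\mathcal{S}-\mathcal{S}$ is used more sharply: a convex curve cannot contain a two-dimensional GAP's worth of structure, so the Freiman model of $\mathcal{T}''$ is essentially one-dimensional.

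More carefully on the energy count: for each ordered pair $(t,t')$ and each common point $p \in (\mathcal{S}_t+t)\cap(\mathcal{S}_{t'}+t')$, we have $p = s+t = s'+t'$ so $t-t' = s'-s \in \mathcal{S}-\mathcal{S}$. The number of such ``coincidence'' quadruples $(t,t',s,s')$ is exactly the number of incidences ``doubled'' in a suitable sense; using that each $\mathcal{S}_t + t$ has $\gtrsim n^{1/3}$ points in $\mathcal{P}$ and $|\mathcal{P}| = n$, Cauchy–Schwarz on the incidence count gives $\gtrsim n^{5/3}$ such quadruples, hence $\gtrsim n^{5/3}$ triples $(t,t',v)$ with $t - t' = v \in \mathcal{S}-\mathcal{S}$, i.e. the number of realizations of differences of $\mathcal{T}'$ inside $\mathcal{S}-\mathcal{S}$ is $\gtrsim n^{5/3}$ while $|\mathcal{S}-\mathcal{S}| \lesssim n^{2/3}$; by Cauchy–Schwarz again this yields additive energy $E(\mathcal{T}') \gtrsim (n^{5/3})^2/n^{2/3} = n^{8/3} \gtrsim |\mathcal{T}'|^3 / (\text{const})$, nearly the maximum. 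Then Balog–Szemerédi–Gowers produces $\mathcal{T}'' \subseteq \mathcal{T}'$, $|\mathcal{T}''| \gtrsim n$, with $|\mathcal{T}'' - \mathcal{T}''| \lesssim n$, and the convexity-of-$\mathcal{S}$ constraint on which differences appear forces, via a Freiman-type argument tailored to convex sets, the dimension to be $\lesssim 1$.

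\textbf{Main obstacle.} The delicate point is passing from ``many differences of $\mathcal{T}'$ lie in the convex set $\mathcal{S}-\mathcal{S}$'' to the \emph{dimension-one} conclusion. Plain Freiman gives only bounded dimension; upgrading to dimension $\lesssim 1$ requires using that $\mathcal{S}-\mathcal{S}$ sits on a strictly convex curve and that a GAP of dimension $\geq 2$ of size $\lesssim n$ would have to intersect such a curve in too few points to sustain the $\gtrsim n^{5/3}$ representation count — essentially a statement that convex curves are incompatible with genuinely two-dimensional additive structure. Making this quantitative, and correctly bookkeeping the various $\lesssim$/$\gtrsim$ constants through the Balog–Szemerédi–Gowers and Freiman applications so that the final GAP has size $\lesssim n$ (not $n^{1+\epsilon}$), is where the real work lies.
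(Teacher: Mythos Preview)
Your energy computation is correct as far as it goes, but it falls short by a factor of $n^{1/3}$, and this gap is fatal. You obtain $\gtrsim n^{5/3}$ ordered pairs $(t,t')\in\mathcal{T}'\times\mathcal{T}'$ with $t-t'\in\mathcal{S}-\mathcal{S}$, hence $E(\mathcal{T}')\gtrsim n^{8/3}$. But $|\mathcal{T}'|^3\sim n^3$, so in the Balog--Szemer\'edi--Gowers theorem the parameter is $K\sim n^{1/3}$, and the output is a subset of size $\gtrsim n^{1-c}$ with doubling $\lesssim n^{c}$ for some absolute $c>0$, not a subset of size $\gtrsim n$ with bounded doubling. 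Moreover the Cauchy--Schwarz step $\sum_p r(p)^2\gtrsim (\sum_p r(p))^2/n$ is tight precisely in the grid-like extremal examples, where $r(p)\asymp n^{1/3}$ for all $p\in\mathcal{P}$; so the $n^{5/3}$ count cannot be improved by a sharper inequality alone. Note also that in your argument convexity of $\mathcal{S}$ is barely used (only to say each difference in $\mathcal{S}-\mathcal{S}$ is attained once), which is a sign that something essential is missing.

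A second, smaller issue: ``dimension $\lesssim 1$'' in the statement simply means bounded dimension (this is the paper's convention for $\lesssim$), not literally dimension one. So the final paragraph of your proposal, where you try to use convexity of $\mathcal{S}-\mathcal{S}$ to force the Freiman progression down to dimension one, is attacking a nonexistent difficulty.

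The paper recovers the missing $n^{1/3}$ by a completely different mechanism. It applies a cutting of the plane into $\sim n^{2/3}$ cells so that each cell meets $\lesssim n^{2/3}$ of the rich translates and contains $\lesssim n^{1/3}$ points of $\mathcal{P}$. Inside a typical cell one records, for each rich curve through it, a triple of three consecutive incidences; these form $\gtrsim n^{2/3}$ edge-disjoint triangles in a graph on $\lesssim n^{1/3}$ vertices. The triangle removal lemma then amplifies this to $\gtrsim n$ triangles per cell, and a short convexity argument converts pairs of triangles sharing an edge into $\gtrsim n^{4/3}$ ``self-intersecting $P_3$'s'' per cell, hence $\gtrsim n^2$ in total. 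Each such $P_3$ gives an edge between two translates whose vector difference is determined by a triple of arcs chosen from $\lesssim n^{1/3}$ fixed arcs on $F$, so the restricted difference set along this graph has size $\lesssim n$. Now Balog--Szemer\'edi--Gowers applies with $K\lesssim 1$ and Freiman finishes. The indispensable new ingredient, absent from your outline, is the triangle removal lemma; without it one is stuck at the $n^{5/3}$ level you reached.
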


Our main application is to the \emph{unit distance problem}, asked by Erd\H{o}s in 1946~\cite{ERD}. The problem is to estimate the maximum number of pairs from a set of $n$ points in the plane that are distance one apart. The best known lower bound of $n^{1 + c/\log \log n}$ is due to Erd\H{o}s and relies on finding a number that can be written as a sum of two squares in many ways, followed by a corresponding scaling of a finite integer grid. On the other hand, the best upper bound of $cn^{4/3}$ is due to Spencer, Szemer\'edi, and Trotter~\cite{SST}. For an excellent survey of this problem, see~\cite{Szem}. Part of the difficulty in improving the upper bound is that, as mentioned before, $cn^{4/3}$ unit distances can occur under a different strictly convex norm~\cite{SSz,VAL}.\\

Several authors have studied the maximum number of unit distances with an additional restrictive property satisfied by grids. For example, Schwartz, Solymosi, and de Zeeuw showed that the number of pairs of points that determine a rational slope and are unit distance apart is $n^{1 + 6/\sqrt{\log n}}$~\cite{SSD1}. Schwartz generalized this result with the restriction that the unit vectors determined belong to a low-rank multiplicative group when embedding in the complex plane~\cite{SCH}. If we instead fix a set of $k$ unit vectors and ask for the maximum number of times a vector from our chosen set can be determined, the answer is $kn-\Theta(\sqrt{n})$, proved by Brass~\cite{BRASS}. The configurations of points achieving the maximum here are also lattices. \\

We extend Brass's result to allow unit distances from a set of unit vectors that grows with the size of the pointset. Our result is a structure theorem, showing that if the number of unit distances achieved is maximum, then a large portion of the pointset is contained in a generalized arithmetic progression. 

% should we defined GAP in the preliminaries? 

\begin{cor}\label{MC} Let $\mathcal{P}$ be a set of $n$ points in the plane, and $U \subset \mathbb{R}^2$ be a set of unit length vectors where $|U| \lesssim n^{1/3}$. If $\# \{ (x,y) \in \mathcal{P} \colon x-y \in U\} \gtrsim n^{4/3}$ then $\gtrsim n$ points of $\mathcal{P}$ are contained in a generalized arithmetic progression of dimension $\lesssim 1$ and size $\lesssim n$.

\end{cor}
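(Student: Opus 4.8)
The plan is to derive Corollary~\ref{MC} from Theorem~\ref{MT} essentially for free, once we choose the right set $\mathcal{S}$. First I would fix a maximal subset $\mathcal{S}\subset \mathcal{P}$ whose points all satisfy $x-y\in U$ for many common $y$, or — more cleanly — apply a dyadic pigeonholing to the incidence count $\#\{(x,y)\in\mathcal{P}\colon x-y\in U\}$. Writing this count as $\sum_{u\in U}\#\{x\in\mathcal{P}\colon x-u\in\mathcal{P}\}$ and interpreting it as incidences between the pointset $\mathcal{P}$ and the translates $\mathcal{P}+u$ for $u\in U$ does not directly fit the hypothesis of Theorem~\ref{MT}, because $\mathcal{P}$ itself need not be convex. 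The resolution is to switch the roles: think of the unit vectors $U$ as the ``curve'' and $\mathcal{P}$ as the translation set. Indeed, a set of $\lesssim n^{1/3}$ distinct unit vectors in $\R^2$ lies on the unit circle, hence is a strictly convex set of $\lesssim n^{1/3}$ points; call it $\mathcal{S}:=U$. The condition $x-y\in U$ for $x,y\in\mathcal{P}$ says precisely that $x\in \mathcal{S}+\{y\}$, i.e. $x\in \mathcal{S}+\mathcal{T}$ with $\mathcal{T}:=\mathcal{P}$. So $(\mathcal{S}+\mathcal{T})\cap\mathcal{P}$ has size $\gtrsim n^{4/3}$, with $|\mathcal{S}|\lesssim n^{1/3}$, $|\mathcal{T}|=n$, and $\mathcal{S}$ strictly convex.

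Now I would apply Theorem~\ref{MT} verbatim: it produces a subset $\mathcal{T}'\subset\mathcal{T}=\mathcal{P}$ with $|\mathcal{T}'|\gtrsim n$ lying in a generalized arithmetic progression of dimension $\lesssim 1$ and size $\lesssim n$. Since $\mathcal{T}'\subset\mathcal{P}$, this is exactly the conclusion of Corollary~\ref{MC}: $\gtrsim n$ points of $\mathcal{P}$ lie in a bounded-dimension generalized arithmetic progression of size $\lesssim n$. The only bookkeeping is to check that the implied constants survive — the constant in $|\mathcal{S}|\lesssim n^{1/3}$ is inherited from the hypothesis of the corollary, and the constants in the conclusion are those delivered by Theorem~\ref{MT} — but no new estimates are needed.

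The one genuine subtlety, and the step I would be most careful about, is the degenerate case where $U$ has repeated or nearly-degenerate structure: a priori $U$ is a set (so distinct vectors), and distinct unit vectors are automatically in strictly convex position on the circle, so there is in fact nothing to worry about here — strict convexity of the circle is exactly what makes $\mathcal{S}=U$ admissible. A secondary point worth a sentence is that Theorem~\ref{MT} requires $|\mathcal{T}|\lesssim n$, which holds with $\mathcal{T}=\mathcal{P}$ and $|\mathcal{P}|=n$. So the ``proof'' is really a two-line reduction, and the substance lives entirely in Theorem~\ref{MT}; I would present it as such, noting explicitly the reinterpretation of ``a set of unit vectors'' as ``a strictly convex point set on the unit circle'' as the key (if elementary) observation.
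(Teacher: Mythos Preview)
Your proposal is correct and is essentially the same as the paper's own argument: take $\mathcal{S}=U$ as a strictly convex set on the unit circle, $\mathcal{T}=\mathcal{P}$, observe that the incidence count $\#\{(x,y)\in\mathcal{P}^2: x-y\in U\}$ is exactly the number of incidences between $\mathcal{S}+\mathcal{T}$ and $\mathcal{P}$, and apply Theorem~\ref{MT}. The paper states this reduction in a single sentence; your initial detour through dyadic pigeonholing and rich subsets is unnecessary, but the final argument you settle on is right.
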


Corollary~\ref{MC} is obtained from Theorem~\ref{MT} by representing the set $U$ of unit vectors as a set of points on a unit circle, a strictly convex curve.

\section{Preliminaries}

We will need several standard tools from additive combinatorics and discrete geometry. The first is a variant of the Szemer\' edi-Trotter theorem \cite{SZT} applying to a slightly more general class of curves (see, e.g. \cite{PASH}). We say that a collection of simple curves $C$ are \emph{pseudo-lines} if any two curves from $C$ intersect in at most one point.

\begin{theorem}[Szemer\'edi-Trotter]\label{szt}
The number of incidences between $n$ points and $m$ pesudo-lines is $\lesssim n^{2/3}m^{2/3} + n + m$.
\end{theorem}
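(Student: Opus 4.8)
The plan is to deduce Theorem~\ref{szt} from the crossing lemma of Ajtai--Chv\'atal--Newborn--Szemer\'edi and Leighton, via the graph-drawing argument of Sz\'ekely. Write $I$ for the number of incidences between the point set $\mathcal{P}$ and the family $\mathcal{C}$ of $m$ pseudo-lines. First throw away every curve of $\mathcal{C}$ meeting $\mathcal{P}$ in at most one point; these contribute at most $m$ incidences in total, so it is enough to bound the number $I_1$ of incidences on curves carrying at least two points of $\mathcal{P}$. For each such curve $\ell$, carrying $k_\ell \ge 2$ points of $\mathcal{P}$, order those points along $\ell$ and join consecutive ones by the sub-arc of $\ell$ between them. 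This yields a drawing in the plane of a graph $G$ with $v \le n$ vertices and
\[
e \;=\; \sum_{\ell} (k_\ell - 1) \;\ge\; \tfrac12\sum_\ell k_\ell \;=\; \tfrac12 I_1
\]
edges, using $k_\ell\ge 2$. One should check this is a legitimate drawing: an arc of $\ell$ passes only through points of $\ell$, and such a point of $\mathcal{P}$ would have to lie strictly between two consecutive chosen points of $\ell$, which is impossible; so no arc runs through a vertex that is not one of its endpoints.

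Next I bound the crossings of this drawing. Two arcs on the same curve meet only at a common endpoint, and two arcs on distinct pseudo-lines $\ell\ne\ell'$ cross only where $\ell$ and $\ell'$ cross, hence at most once. Therefore the drawing has at most $\binom{m}{2} < m^2/2$ crossings, so $\operatorname{cr}(G) < m^2/2$. The crossing lemma says that every graph with $v$ vertices and $e \ge 4v$ edges satisfies $\operatorname{cr}(G) \gtrsim e^3/v^2$. If $e < 4v \le 4n$, then $I_1 \le 2e < 8n$. Otherwise $m^2/2 > \operatorname{cr}(G) \gtrsim e^3/v^2 \ge e^3/n^2$, whence $e \lesssim m^{2/3}n^{2/3}$ and $I_1 \le 2e \lesssim m^{2/3}n^{2/3}$. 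Either way $I \le I_1 + m \lesssim n^{2/3}m^{2/3} + n + m$, which is the claimed bound.

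It remains to recall the proof of the crossing lemma, which is the one genuinely non-trivial ingredient. Euler's formula gives that a simple planar graph on $v\ge 3$ vertices has at most $3v-6$ edges; deleting one edge per crossing from an optimal drawing of $G$ and applying this bound yields the weak estimate $\operatorname{cr}(G) \ge e - 3v$. To amplify it, keep each vertex of $G$ independently with probability $p := 4v/e \in (0,1]$ and pass to the induced subdrawing: its expected numbers of vertices, edges, and crossings are $pv$, $p^2 e$, and at most $p^4\operatorname{cr}(G)$, so $p^4\operatorname{cr}(G) \ge p^2 e - 3pv$, i.e. $\operatorname{cr}(G) \ge e^3/(64 v^2)$. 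Aside from this probabilistic step, the only thing to be careful about is the degeneracy bookkeeping in the drawing of $G$ (three arcs through a point, tangential meetings, adjacent arcs sharing an endpoint), all of which either only lower the crossing count or disappear under an arbitrarily small perturbation; so once the crossing lemma is in hand there is no further obstacle.
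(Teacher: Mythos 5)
The paper does not actually prove Theorem~\ref{szt}; it is quoted as known, with a pointer to Pach and Sharir \cite{PASH}, so there is no internal proof to compare against. Your derivation via Sz\'ekely's crossing-number method is correct and is the standard route to the pseudo-line Szemer\'edi--Trotter bound. In particular: discarding curves with at most one incidence costs $\leq m$; the chord graph $G$ has $e \geq I_1/2$ edges because $k_\ell - 1 \geq k_\ell/2$ when $k_\ell \geq 2$; $G$ is simple because if $\{p,q\}$ were adjacent along two distinct pseudo-lines those two curves would share two points; the crossing count $\operatorname{cr}(G) \leq \binom{m}{2}$ follows from the pseudo-line condition; and the split into the cases $e < 4v$ and $e \geq 4v$ before invoking the crossing lemma is exactly right. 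Your parenthetical recollection of the crossing lemma itself (the $e - 3v$ bound from Euler, amplified probabilistically with $p = 4v/e$) is also accurate. One small point you might make explicit: if the ``pseudo-lines'' are allowed to be closed simple curves (pairwise meeting at most once), the consecutive-point chords form a cycle rather than a path, giving $e = I_1$ instead of $e \geq I_1/2$; the remainder of the argument is unchanged. Apart from that, there is nothing to add.
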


Next, we will need the following consequence of the triangle removal lemma of Ruzsa and Szemer\'edi \cite{RuSze}. The best-known quantitative bound on the triangle removal lemma is due to Fox \cite{Fox}.

\begin{theorem}\label{trirem}
Let $G$ be a graph on $n$ vertices, and suppose $G$ contains $\gtrsim n^2$ edge-disjoint triangles. Then $G$ contains $\gtrsim n^3$ triangles.
\end{theorem}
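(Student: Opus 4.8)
The plan is to derive the theorem immediately from the triangle removal lemma of Ruzsa and Szemer\'edi, used as a black box in the following (qualitative) form: for every $\varepsilon>0$ there is a $\delta=\delta(\varepsilon)>0$ such that every $n$-vertex graph containing at most $\delta n^3$ triangles can be turned into a triangle-free graph by deleting at most $\varepsilon n^2$ of its edges.

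First I would make the hypothesis quantitative: suppose $G$ contains a family $\mathcal{F}$ of at least $cn^2$ pairwise edge-disjoint triangles, where $c>0$ is the absolute constant hidden in the $\gtrsim$. The one real step is a lower bound on how many edges must be deleted to destroy every triangle of $G$. If $F$ is a set of edges whose removal makes $G$ triangle-free, then $F$ must contain an edge of each triangle in $\mathcal{F}$; since these triangles are edge-disjoint, the edges chosen for distinct members of $\mathcal{F}$ are themselves distinct, so $|F|\ge|\mathcal{F}|\ge cn^2$. Hence $G$ cannot be made triangle-free by deleting fewer than $cn^2$ edges. Now I would apply the removal lemma with $\varepsilon=c/2$ and the resulting $\delta=\delta(c/2)$: its contrapositive says that a graph which cannot be made triangle-free by deleting at most $(c/2)n^2$ edges — in particular our $G$, since $(c/2)n^2<cn^2$ — must contain more than $\delta n^3$ triangles. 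As $\delta$ depends only on $c$, this is $\gtrsim n^3$ triangles with an absolute implied constant, which is the claim (for bounded $n$ the statement is vacuous after adjusting constants).

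I do not expect a genuine obstacle: essentially all the content is imported through the removal lemma. The two points that need care are (i) the edge-disjointness observation of the second paragraph, which is what makes the word ``edge-disjoint'' essential in the hypothesis — for contrast, the reverse implication, that $\gtrsim n^3$ triangles force $\gtrsim n^2$ edge-disjoint ones, is instead an easy greedy argument, since each edge lies in at most $n-2$ triangles — and (ii) lining up the quantifiers in the removal lemma, which is why I take $\varepsilon$ slightly below $c$ rather than equal to it. The quantitative strength of Fox's bound plays no role.
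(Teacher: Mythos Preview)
Your proposal is correct. The paper does not give its own proof of this statement; it simply records it as a ``consequence of the triangle removal lemma of Ruzsa and Szemer\'edi,'' which is precisely the black box you invoke, and your deduction from that lemma is the standard one.
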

The remaining two theorems are frequently used to show structure in subsets of additive groups. Both are well-known tools in additive combinatorics. A comprehensive treatment of these results can be found in, e.g. \cite{TV}.

\begin{theorem}[Balog-Szemer\' edi-Gowers \cite{BSG1,BSG2}]\label{bsg}
Suppose $G$ is an abelian group, $A \subset G$ is finite, and $H$ is a graph with vertex set $A$ and $\gtrsim |A|^2$ edges. Then, if $|A -_H A| \lesssim |A|$ there must exist $A' \subset A$ such that $|A'| \gtrsim |A|$ and $|A' - A'| \lesssim |A'|$
\end{theorem}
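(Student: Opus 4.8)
The plan is to reduce Theorem~\ref{bsg} to a purely graph-theoretic statement about $H$, and then to finish with a double-counting estimate against the restricted difference set $S := A -_H A$.

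Write $n = |A|$, and record the two pieces of quantitative input: $|S| \le Kn$ for a universal constant $K$, and $e(H) \ge c_0 n^2$ for a universal constant $c_0 > 0$. I claim it is enough to produce $A' \subseteq A$ with $|A'| \gtrsim n$ such that for \emph{every} ordered pair $a, b \in A'$ there are $\gtrsim n^3$ walks of length $4$ in $H$ from $a$ to $b$. Granting this: for each $d \in A' - A'$ fix a pair $a_d, b_d \in A'$ with $a_d - b_d = d$; every walk $a_d \sim x \sim y \sim z \sim b_d$ yields $d = (a_d - x) + (x - y) + (y - z) + (z - b_d)$, a sum of four elements of $S$, and the map sending such a walk to $(a_d - x,\ x - y,\ y - z,\ z - b_d) \in S^4$ is injective across all of $A' - A'$ at once, since the $4$-tuple determines $d$ (its coordinate sum), hence determines $a_d$, hence $x$, $y$, $z$, $b_d$ in turn. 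Therefore $\sum_{d \in A'-A'} \#\{\text{length-}4\text{ walks } a_d \to b_d\} \le |S|^4 \le K^4 n^4$; the left side is $\gtrsim |A'-A'|\cdot n^3$, so $|A' - A'| \lesssim n \lesssim |A'|$, which is the conclusion.

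Constructing $A'$ is the substance of the proof. First I would run a dependent-random-choice argument: pick $v \in A$ uniformly at random and put $A_1 = N_H(v)$, so $\mathbb{E}|A_1|$ is the average degree, which is $\gtrsim n$. Calling a pair $\{a,b\}$ \emph{bad} if $|N_H(a) \cap N_H(b)| < \gamma n$, one has $\mathbb{E}\big[\#\{\text{bad pairs in } A_1\}\big] = \sum_{\{a,b\}\text{ bad}} \tfrac{|N_H(a)\cap N_H(b)|}{n} \le \gamma n^2$, while $\mathbb{E}|A_1|^2 \ge (\mathbb{E}|A_1|)^2 \gtrsim n^2$; choosing $\gamma$ a small enough universal constant and comparing $\mathbb{E}\big[|A_1|^2 - \lambda\,\#\{\text{bad pairs in }A_1\}\big]$ for a suitable weight $\lambda$ produces a value of $v$ for which $A_1$ is simultaneously of size $\gtrsim n$ and contains at most $\varepsilon |A_1|^2$ bad pairs, with $\varepsilon$ as small as we wish. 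Next I would clean this up: let $\Gamma$ be the graph on $A_1$ joining two vertices precisely when their pair is \emph{not} bad, delete from $A_1$ the $\le O(\sqrt\varepsilon)|A_1|$ vertices whose $\Gamma$-degree is below $(1-\sqrt\varepsilon)|A_1|$, and let $A'$ be the remainder; then $|A'| \gtrsim n$, and any $a, b \in A'$ have $\gtrsim n$ common neighbours $c$ in $\Gamma$. For each such $c$, both $\{a,c\}$ and $\{c,b\}$ are not bad, so there are $\gtrsim n$ choices of $x \in N_H(a)\cap N_H(c)$ and $\gtrsim n$ choices of $z \in N_H(c)\cap N_H(b)$, and $a \sim x \sim c \sim z \sim b$ is a length-$4$ walk in $H$; letting $(c,x,z)$ range yields $\gtrsim n^3$ distinct such walks, exactly the property demanded above.

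The main obstacle is this graph-theoretic core: upgrading "most pairs are well connected" to "\emph{every} pair in a still-linear-sized subset is well connected", with all constants universal. The second-moment step is the delicate point, since one must balance $|A_1|^2$ against the bad-pair count with the correct weight $\lambda$; and one must accept that a common $\Gamma$-neighbour only furnishes a walk through $H$, not a genuine common $H$-neighbour — this is what forces the walk length up to $4$, and hence why the count in the second paragraph lands in $S^4$ rather than $S^2$. By contrast, the sumset count and the Turán-type vertex-deletion bounds in the cleanup are routine. An alternative organisation would fold the cleanup into a single extra round of dependent random choice, but the two-step version above keeps the bookkeeping transparent and entirely elementary.
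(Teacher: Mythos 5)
Theorem~\ref{bsg} is stated in the paper as a black box, with no internal proof; the paper simply cites \cite{BSG1,BSG2} and points to \cite{TV} for a comprehensive treatment. So there is nothing in the paper to compare against, and your proposal must be judged on its own. What you have written is a correct sketch of the standard \emph{polynomial-bound} proof — the Gowers path-counting argument, in the streamlined form found in \cite{TV}. The skeleton is sound: (i) dependent random choice ($A_1 = N_H(v)$) together with a weighted second-moment comparison produces a linear-sized $A_1$ with only an $\varepsilon$-fraction of bad pairs; (ii) a Tur\'an-type deletion of high bad-degree vertices upgrades ``most pairs good'' to ``all pairs good'' inside a set $A' \subseteq A_1$ with $|A'| \gtrsim n$; (iii) every $a,b \in A'$ then has $\gtrsim n$ common $\Gamma$-neighbours $c$, each supplying $\gtrsim n^2$ length-four $H$-walks $a \sim x \sim c \sim z \sim b$; and (iv) your walk-to-$S^4$ encoding, together with the observation that the coordinate sum recovers $d$ and hence $a_d, x, y, z, b_d$ in turn, makes the map injective over all $d \in A'-A'$ at once, giving $|A'-A'| \cdot n^3 \lesssim |S|^4 \lesssim n^4$. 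The caveats you flag (the order of choosing $\varepsilon$, then $\lambda = 1/\varepsilon$, then $\gamma$; and the fact that common $\Gamma$-neighbours only give $H$-walks of length four, not genuine common $H$-neighbours) are the right ones to worry about, and both are absorbed into universal constants as you indicate. It is perhaps worth adding for context that the \emph{original} proof of Balog and Szemer\'edi \cite{BSG1} runs through the regularity lemma and only gives tower-type constants; the route you reconstruct is the one that yields the $\lesssim$ bounds the paper actually needs.
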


\begin{theorem}[Freiman-Ruzsa \cite{FREIMAN,RUZSA}]\label{frei}
Suppose $G$ is an abelian group, $A \subset G$ is finite, and $|A-A| \lesssim |A|$. Then, $A$ is contained in a generalized arithmetic progression of size $\lesssim |A|$ and dimension $\lesssim 1$.
\end{theorem}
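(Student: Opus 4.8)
The plan is to prove this by Ruzsa's classical route: model a large part of $A$ inside a cyclic group, locate a generalized arithmetic progression there by Fourier analysis and the geometry of numbers, and transport it back to $G$. Write $|A - A| \le K|A|$, where $K$ is an absolute constant; every implied constant below depends only on $K$ and is therefore $\lesssim 1$, which is exactly the strength the conclusion requires. First I would reduce to the torsion-free case --- this is the only case needed for the application, where $G = \R^2$, and after replacing $A$ by a Freiman-isomorphic copy one may even take $G = \Z$; for a general abelian group one replaces ``generalized arithmetic progression'' throughout by ``coset progression'', with no other change. Next I would record, via the Ruzsa triangle inequality and the Pl\"unnecke--Ruzsa inequalities \cite{TV}, that $|mA - \ell A| \lesssim |A|$ for every fixed $m, \ell$; in particular $|2A - 2A|$, $|3A - 2A|$, and $|sA - sA|$ are all $\lesssim |A|$ for a suitable absolute constant $s$ (any $s \ge 16$ will do).

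Using the Ruzsa modelling lemma \cite{TV} with the bound on $|sA - sA|$, I would produce a prime $N$ with $|A| \lesssim N \lesssim |A|$ and a subset $A' \subseteq A$ with $|A'| \gtrsim |A|$ that is Freiman $s$-isomorphic to a set $B \subseteq \Z/N\Z$. Since $s$ is large, the isomorphism respects all the bounded-length sumset computations used below: $B$ has density $\alpha := |B|/N \gtrsim 1$, and $|2B - 2B| \lesssim |B|$ and $|3B - 2B| \lesssim |B|$. Inside $\Z/N\Z$ I would then apply Bogolyubov's lemma \cite{TV}: the set $B + B - B - B$ contains a Bohr set $\{x : \|rx/N\| \le 1/4 \text{ for all } r \in \Gamma\}$ whose frequency set satisfies $|\Gamma| \le \alpha^{-2} \lesssim 1$ by Parseval. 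A geometry-of-numbers argument (Minkowski's second theorem, \cite{TV}) shows that such a Bohr set with $d := |\Gamma|$ frequencies contains a proper generalized arithmetic progression $Q$ of dimension at most $d \lesssim 1$ and size $|Q| \gtrsim N \gtrsim |A|$; note also $|Q| \le |2B - 2B| \lesssim |A|$.

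To finish in the model and transport back: since $Q \subseteq 2B - 2B$ we have $B + Q \subseteq 3B - 2B$, hence $|B + Q| \lesssim |A| \lesssim |Q|$, so the Ruzsa covering lemma \cite{TV} yields $X \subseteq B$ with $|X| \lesssim 1$ and $B \subseteq X + Q - Q$. Now $Q - Q$ is a progression of dimension $\dim Q$ and size $\le 2^{\dim Q}|Q|$, and $X$ lies in the progression $\sum_{x \in X} \{0, x\}$ of dimension $|X|$ and size $2^{|X|}$, so $B$ is contained in a generalized arithmetic progression in $\Z/N\Z$ of dimension $\lesssim 1$ and size $\lesssim |A|$. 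Lifting representatives to $\Z$ and pulling back through the Freiman isomorphism (of order $s$, chosen large enough to see the bounded iterated sumsets of $B$ involved in $X + Q - Q$) produces a progression in $\Z$ with the same bounds that contains $A'$. Finally, since $|A + A'| \le |A + A| \lesssim |A| \lesssim |A'|$, the Ruzsa covering lemma gives a set $Y$ with $|Y| \lesssim 1$ and $A \subseteq A' - A' + Y$, and absorbing the small set $Y$ into the dimension exactly as above places all of $A$ in a generalized arithmetic progression of dimension $\lesssim 1$ and size $\lesssim |A|$.

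I expect the main obstacle to be the interaction of the modelling step with the pull-back. One must choose the isomorphism order $s$ large enough that every sumset identity exploited in $\Z/N\Z$ --- in particular the one behind $B \subseteq X + Q - Q$, which involves elements lying in a bounded iterated sumset of $B$ --- is faithfully reflected back in $\Z$, and one must check that lifting a progression out of the cyclic group $\Z/N\Z$ creates no wrap-around collisions; this is where the bounds $N \lesssim |A|$ and $|X + Q - Q| \lesssim |A| \lesssim N$ are used. The other genuinely substantial ingredient is the geometry-of-numbers extraction of a proper progression of controlled dimension and near-full size from the Bohr set; the remaining steps are bookkeeping with the Ruzsa and Pl\"unnecke--Ruzsa inequalities.
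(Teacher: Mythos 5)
The paper does not prove Theorem~\ref{frei}; it is quoted as a black box, with references to Freiman, Ruzsa, and Tao--Vu, and is simply applied at the end of the proof of Theorem~\ref{MT}. Your sketch is a correct outline of Ruzsa's standard proof: Pl\"unnecke--Ruzsa to control iterated sumsets, the modelling lemma to pass to a dense subset of $\Z/N\Z$, Bogolyubov plus Minkowski's second theorem to extract a large proper progression from $2B-2B$, and two applications of the Ruzsa covering lemma (once to cover $B$, once to recover all of $A$ from the modelled subset $A'$). One small caveat: you invoke ``torsion-free,'' but since the paper works in $G=\R^2$ you can equally well restrict to that case directly, and in a general abelian group your remark about coset progressions is the right fix; beyond that, your account matches the treatment in \cite{TV} and supplies the standard proof the paper implicitly defers to.
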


\section{Many copies of a convex pointset}

%Throughout we use the notation $A\gtrsim B$ to mean there exists a universal constant $C>0$ such that $A \geq CB$.

%\begin{theorem}\label{MT} Let $\mathcal{P}$ be a set of $n$ points in $\mathbb{R}^2$, $\mathcal{S}$ be a strictly convex set of $\lesssim n^{1/3}$ points, and $\mathcal{T}$ be a set of $\lesssim n$ vectors in $\mathbb{R}^2$. If $\mathcal{S} + \mathcal{T}$ intersects $\mathcal{P}$ in $\gtrsim n^{4/3}$ points, then there is a subset $\mathcal{T}'\subset \mathcal{T}$ of size $\gtrsim n$ contained in a generalized arithmetic progression of dimension $ \lesssim 1$ and size $\lesssim n$.

%\end{theorem} 

Before beginning a proof of Theorem~\ref{MT} we make a reduction to show that $\mathcal{S}$ can be assumed to lie on a convex curve with the following characteristics. We say a strictly convex curve $F$ is \emph{nice} if

\begin{enumerate}[(1)] 

\item Any two translates of $F$ intersect exactly once, unless one is a vertical shift of the other, in which case they do not intersect at all.

\item $F$ is $x$-monotone; every vertical line intersects $F$ exactly once.

\item For any pair of points in $\mathbb{R}^2$ not on a vertical line there is a unique translate of $F$ that passes through them both.

% probably these can be replaced by their 'generic' equivalents

\end{enumerate}

Let $\mathcal{P}$, $\mathcal{S}$, and $\mathcal{T}$ be as in the hypothesis of Theorem~\ref{MT}. We can partition $\mathcal{S} = \cup_{i=1}^3 \mathcal{S}_i$ into three sets such that each $\mathcal{S}_i$ after an appropriate rotation is contained in a nice curve. At least one $\mathcal{S}_i+\mathcal{T}$ intersects $\mathcal{P}$ in $\gtrsim n^{4/3}$ points. It follows that we can assume $\mathcal{S}$ lies on a nice curve $F$. The benefit of this reduction is that it is easy to obtain the following cutting theorem about nice curves. 

%We will occasionally abuse our notation and use $\mathcal{S}$ to mean a set of discrete points, and also the nice curve that contains them.

\begin{theorem}\label{cuttheo} Let $\mathcal{F}$ be a set of $n$ translates of a nice curve $F$. For any choice of parameter $11\leq r<n$, there exists a decomposition of the plane into at most $20r^2$ cells, where the boundary of each cell consists of a union of at most two arcs from $\mathcal{F}$ and at most two vertical line segments, such that at most $n/r$ curves in $\mathcal{F}$ intersect the interior of any cell.

\end{theorem}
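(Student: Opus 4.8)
The plan is to prove this cutting theorem by a standard probabilistic (random sampling) argument, of the type used by Clarkson--Shor, adapted to the fact that translates of a nice curve behave like pseudo-lines: by property (1) of a nice curve, any two translates (that are not vertical shifts of each other) meet in exactly one point, so an arrangement of $n$ translates of $F$ has $\lesssim n^2$ vertices, exactly like a line arrangement. The cells of such an arrangement are bounded by arcs of the curves, and crucially, since $F$ is $x$-monotone, each cell is $x$-monotone and its boundary naturally splits into an upper chain and a lower chain; if we also cut along the vertical lines through all vertices of the arrangement, each resulting piece is bounded by at most one upper arc, at most one lower arc, and at most two vertical segments.

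First I would take a random sample $\mathcal{R} \subset \mathcal{F}$ by including each curve independently with probability $p = c r / n$ for a suitable constant $c$, so that $\mathbb{E}|\mathcal{R}| \asymp r$. Form the arrangement of $\mathcal{R}$, which has $\lesssim r^2$ vertices, and then refine it by drawing a vertical line through every vertex; this produces $\lesssim r^2$ cells, each bounded by at most two arcs (one upper, one lower) and at most two vertical segments, as required by the statement. Next I would invoke the standard Clarkson--Shor / $\varepsilon$-net style bound: with positive probability (in fact high probability), every cell of this refined arrangement of $\mathcal{R}$ is crossed by at most $O((n/r)\log r)$ curves of $\mathcal{F}$. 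To remove the logarithmic factor and get the clean bound $n/r$ with the explicit constants $20 r^2$ and threshold $11$, one uses the sharper exponential-decay estimate of Clarkson--Shor: the probability that a fixed cell determined by a constant number of curves survives in the arrangement of $\mathcal{R}$ while being crossed by more than $t \cdot n/r$ curves of $\mathcal{F}$ decays like $e^{-\Omega(t)}$, and summing this over the $\lesssim r^2$ potential cells (there are polynomially many "canonical" cells) shows that for $r \geq 11$ one can choose the sample so that no cell is crossed by more than $n/r$ curves, while keeping the cell count below $20 r^2$.

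The step I expect to be the main obstacle — or at least the one requiring the most care — is the combinatorial bookkeeping that turns "cell of a pseudo-line arrangement" into "region bounded by at most two arcs and at most two vertical segments" with the stated explicit constants. One must verify that refining by verticals through the $\binom{|\mathcal{R}|}{2}$ vertices, together with the $x$-monotonicity from property (2), genuinely forces each piece to have the claimed simple boundary structure (an $x$-monotone region whose top and bottom are each a single arc because two curves cross at most once, hence within a vertical slab containing no vertex the upper envelope is a single arc and likewise the lower), and then to chase the constants: $|\mathcal{R}| \le 2r$ with the right probability, at most $\binom{2r}{2} < 2r^2$ vertices, each contributing one vertical line cutting the plane, for a total that can be bounded by $20 r^2$ cells once $r \ge 11$. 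The probabilistic estimates themselves are routine given Theorem~\ref{szt} (which controls incidences, hence the complexity of the arrangement) and the Clarkson--Shor technique; the geometry of nice curves has been engineered precisely so that everything reduces to the line-arrangement case.
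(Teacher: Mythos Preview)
Your overall strategy (random sampling \`a la Clarkson--Shor) is a legitimate alternative to what the paper actually does, but the plan as written contains a concrete error in the cell count, and is optimistic about removing the logarithm and recovering the explicit constants.

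\textbf{The cell-count error.} You propose to refine the arrangement of the sample $\mathcal{R}$ (with $|\mathcal{R}|\asymp r$) by drawing a full vertical \emph{line} through every vertex, and then assert this yields $\lesssim r^2$ cells. That is false: there are $\binom{|\mathcal{R}|}{2}\asymp r^2$ vertices, hence $\asymp r^2$ vertical lines, and each such line meets all $|\mathcal{R}|\asymp r$ sample curves, so each vertical line adds $\asymp r$ new cells. The refined arrangement therefore has $\asymp r^3$ cells, not $r^2$. What you want instead is the \emph{vertical (trapezoidal) decomposition}: from each vertex extend a vertical \emph{segment} only up to the next curve above and down to the next curve below. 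That does give $O(r^2)$ trapezoids, each bounded by at most two arcs and two vertical segments, and is the object to which Clarkson--Shor applies.

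\textbf{The logarithm and the constants.} A single random sample together with a union bound over the $O(r^2)$ trapezoids gives, with positive probability, that every trapezoid is crossed by $O((n/r)\log r)$ curves of $\mathcal{F}$; the exponential tail you invoke is exactly what produces the $\log r$, not what removes it. To obtain a genuine $(1/r)$-cutting with $O(r^2)$ cells one needs the second-stage refinement of Chazelle--Friedman (resample inside each heavy trapezoid), which you do not mention. Even with that in hand, the probabilistic route yields $O(r^2)$ and $O(n/r)$ with unspecified implied constants; there is no realistic way to squeeze out the exact values $20r^2$, $n/r$, and the threshold $r\ge 11$ from it.

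\textbf{What the paper does instead.} The paper's proof is entirely deterministic and follows Matou\v{s}ek's level-based argument for lines. One sets $q\approx n/(3r)$, picks (by pigeonhole) a residue $i$ so that the levels $i,\,i+q,\,i+2q,\ldots$ together contain at most $n^2/q$ edges, and replaces each such level by its $q$-\emph{simplification}: a monotone curve built from arcs of translates of $F$ joining every $q$-th edge of that level. A short lemma shows each simplifying arc is crossed by at most $q+1$ curves of $\mathcal{F}$ and stays within $\lceil q/2\rceil$ levels of the original; hence consecutive simplified levels are disjoint, and extending short vertical segments from their breakpoints yields at most $2n^2/q^2+7n/q+2<20r^2$ cells, each crossed by at most $3q+2<n/r$ curves. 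This is where the specific constants come from. Your sampling approach, once repaired with the trapezoidal decomposition and a Chazelle--Friedman refinement, would prove an $O(r^2)$/$O(n/r)$ version of the theorem, which in fact suffices for the application in the paper, but not the theorem exactly as stated.
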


We give a proof of Theorem~\ref{cuttheo} in Section~\ref{cutsec} by adapting an argument of Matou\v{s}ek for the equivalent theorem on lines~\cite[\S 4.7]{MAT}.

\begin{proof}[Proof of Theorem~\ref{MT}] Let $\mathcal{F} = F + \mathcal{T}$ be the family of translates of $F$. 

\begin{itemize} 
\item A \emph{good curve} from $\mathcal{F}$ contains $\gtrsim n^{1/3}$ points from $\mathcal{P}$. We let $\mathcal{F}' \subset \mathcal{F}$ be the set of good curves and $\mathcal{T}'\subset \mathcal{T}$ be the corresponding set of translating vectors.

\end{itemize} 

We note that since $\mathcal{S} + \mathcal{T}$ intersects $\mathcal{P}$ in $\gtrsim n^{4/3}$ points, we must have that $|\mathcal{F}'| \gtrsim |\mathcal{F}| \gtrsim n$. Throughout our proof we will choose constants $C_i$, $i=1,2,3,4$. We begin by applying Theorem~\ref{cuttheo} on our set of curves $\mathcal{F}'$, with the choice $r = C_1n^{1/3}$. We obtain a cutting of $\mathbb{R}^2$ into at most $20C_1^2n^{2/3}$ cells such that each cell is entered by at most $\frac{|\mathcal{F}'|}{C_1n^{1/3}}$ curves from $\mathcal{F}'$. Our cutting is composed of $\lesssim n^{2/3}$ pseudolines, as shown in the proof of Theorem~\ref{cuttheo}. Hence the number of incidences between $\mathcal{S}+\mathcal{T}$ and $\mathcal{P}$ is $\lesssim n^{10/9}$ by Theorem~\ref{szt}. For the rest of the proof, we will ignore points that are contained in the cell-boundaries of our cutting. Furthermore, from now on when we say a point is ``in'' a cell, we mean in the interior. \\

We make the following two definitions to aid with showing the typical behaviour of points and curves in each cell.

%By construction, every curve of $\mathcal{F}$ intersects a cell boundary of our partition in at most $\lesssim n^{1/3}$ points: not obviously true to me? but an easy 'lower half' argument shows that at least n/2 of the curves have at most 4n^{1/3} intersections with boundaries

%\noindent\textbf{Key definitions:}

\begin{itemize}

    %\item A \emph{good curve} in $\mathcal{S}+\mathcal{T}$ is not contained in the boundary of the cells
    \item Let $F_0 \in \mathcal{F}'$ be a good curve and $\mathcal{S}_0$ be the set of points from $\mathcal{S}$ on $F_0$. A \emph{good triple} is a set of three points $x_0,y_0,z_0 \in \mathcal{P}\cap \mathcal{S}_0$ ordered left to right along $F_0$ such that: i) the points are in the same cell; ii) no other points from $\mathcal{P}\cap \mathcal{S}_0$ lie on $F_0$ between $x_0$ and $z_0$; and iii) if $x,y,z \in \mathcal{S}$ are the points corresponding to $x_0,y_0,z_0$ then the number of points between $x$ and $z$ on $F \cap \mathcal{S}$ is at most $C_2$.
    
    %ifthe number of points along $F$

    %; ii) if the triple is $xyz$, then there are no points on the curve incident with $\mathcal{P}$ between $xy$ and $yz$; iii) there are at most $a/C$ points between $xy$ and $yz$. ($a$ tbd)
    \item A \emph{good cell} has $\geq C_3n^{2/3}$ distinct good curves containing at least one good triple, and $\le C_4n^{1/3}$ points in it. We note that since a good cell clearly has $\geq C_3n^{2/3}$ good triples, and since a pair of points can belong to at most two good triples, a good cell necessarily also has $\gtrsim{n^{1/3}}$ points.

\end{itemize}

%\begin{claim} There are $\Omega(n^3)$ good curves in $\mathcal{T}$.

%\end{claim}

\begin{quote}

\begin{claim}\label{gtclaim} There are $\gtrsim n^{4/3}$ good triples. 

\end{claim}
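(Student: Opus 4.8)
The plan is to count good triples by a double-counting argument that processes the curves in $\mathcal{F}'$ one at a time and shows that each good curve contributes $\gtrsim n^{1/3}$ good triples. First I would fix a good curve $F_0 \in \mathcal{F}'$ and look at the set $\mathcal{P} \cap \mathcal{S}_0$ of points on it, which has size $\gtrsim n^{1/3}$ by definition of a good curve. Ordering these points left to right along $F_0$, I would break the sequence into maximal blocks of consecutive points lying in a common cell; since there are only $\lesssim n^{2/3}$ cells and the curve is $x$-monotone, the curve $F_0$ crosses at most $O(r) = O(n^{1/3})$ cell boundaries (each boundary arc or vertical segment is hit at most once or twice), so the number of such blocks is $\lesssim n^{1/3}$. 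Consequently all but $\lesssim n^{1/3}$ of the points of $\mathcal{P} \cap \mathcal{S}_0$ lie in blocks of size $\geq 2$, and within each block the consecutive triples of points automatically satisfy conditions (i) and (ii) in the definition of a good triple. The number of such triples on $F_0$ is therefore $\gtrsim n^{1/3}$ provided the point count on $F_0$ sufficiently exceeds the number of blocks, which we arrange by choosing the implied constants.

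Next I would impose condition (iii). The obstruction is that a consecutive triple $x_0, y_0, z_0$ on $F_0$ inside a cell may correspond, under the translation identifying $F_0$ with $F$, to points $x, y, z \in \mathcal{S}$ that are very far apart along $F \cap \mathcal{S}$, with many intervening points of $\mathcal{S}$. To control this I would use a pigeonhole / averaging argument over the pattern of which points of $\mathcal{S}$ appear on $F_0$: among the $\gtrsim n^{1/3}$ consecutive-in-cell triples produced above, the ones that fail (iii) are those skipping more than $C_2$ points of $\mathcal{S}$; since $|\mathcal{S}| \lesssim n^{1/3}$, and the triples that fail (iii) can be charged to the ``gaps'' they skip, a constant fraction of the triples survive once $C_2$ is taken to be a large enough constant. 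This leaves $\gtrsim n^{1/3}$ good triples on each of the $\gtrsim n$ good curves, for a total of $\gtrsim n^{4/3}$ good triples (noting that a triple lies on exactly one curve of $\mathcal{F}'$, since its three collinear-on-a-translate points determine that translate uniquely by niceness property (3)).

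The main obstacle I expect is making the charging for condition (iii) clean. It is tempting to say ``only $\lesssim n^{1/3}$ triples can skip many points,'' but a single curve $F_0$ only carries $\lesssim n^{1/3}$ triples to begin with, so a naive bound gives nothing; the point is rather that on each curve the total ``skip'' summed over consecutive triples telescopes and is bounded by $|\mathcal{S}| \lesssim n^{1/3}$, hence at most a constant fraction of the triples can individually skip more than $C_2$ points — but one must be careful that triples on a curve can overlap in a point, so the skips of consecutive triples are not quite disjoint intervals of $\mathcal{S}$; taking every other triple, or absorbing the overlap into the constant, fixes this. A secondary technical point is verifying the cell-crossing bound for a single nice curve against the cutting of Theorem~\ref{cuttheo}: each cell boundary consists of at most two arcs of $\mathcal{F}'$ and two vertical segments, and by niceness property (1) the curve $F_0$ meets each such arc in at most one point and each vertical segment in at most one point, so the number of blocks is bounded by a constant times the number of cell boundaries incident to $F_0$, which is $\lesssim r = C_1 n^{1/3}$; this is where the choice $r \asymp n^{1/3}$ is used.
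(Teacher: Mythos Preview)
Your overall strategy is the same as the paper's---count, on each good curve, $\gtrsim n^{1/3}$ triples satisfying the conditions, and pay for the losses due to cell walls---but the place where you diverge is also where the argument breaks.

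The problematic step is the per-curve cell-crossing bound. You assert that a single good curve $F_0$ crosses $O(r)=O(n^{1/3})$ cell boundaries, citing $x$-monotonicity and the fact that each boundary arc or vertical segment is hit at most once. But the cutting of Theorem~\ref{cuttheo} has $\lesssim r^2$ boundary pieces (the $q$-simplifications together contain $\lesssim r^2$ arcs, and there are $\lesssim r^2$ vertical segments), so ``hit each at most once'' only gives $\lesssim r^2 = n^{2/3}$ crossings for $F_0$, not $\lesssim r$. Nothing in the cutting theorem gives a per-curve bound; it only says each cell meets $\le |\mathcal{F}'|/r$ curves, which yields a \emph{total} of $\lesssim r|\mathcal{F}'|$ curve--cell incidences, i.e.\ $\lesssim r$ on average but not uniformly. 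Since your argument for condition~(iii) is applied per curve \emph{after} having already thrown away triples broken by cell walls, this gap propagates: you cannot guarantee enough surviving triples on an individual curve to run the telescoping bound.

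The fix is exactly what the paper does: reverse the order. First establish conditions (ii) and (iii) on each good curve---your telescoping argument for (iii) is fine here (the paper instead partitions $\mathcal{S}_0$ into consecutive blocks of length $C_2$ and counts blocks containing $\ge 3$ points of $\mathcal{P}$, but either works)---getting $\gtrsim n^{1/3}|\mathcal{F}'|$ triples in total. Then impose condition~(i) \emph{globally}: the total number of curve--cell-wall incidences is $\le 20C_1 n^{1/3}|\mathcal{F}'|$, each destroying at most two triples, so choosing $C_1$ small enough leaves $\gtrsim n^{1/3}|\mathcal{F}'|\gtrsim n^{4/3}$ good triples. No per-curve crossing bound is needed.
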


%Two good triples can be destroyed for each intersection of a curve with a cell boundary. Each curve intersects a boundary in $\lesssim n^{1/3}$ points, and so $\gtrsim n^{4/3}$ triples will remain. 

\begin{proof} Pick an element $F_0 \in \mathcal{F}'$ and let $\mathcal{S}_0$ be the points from $\mathcal{S}$ on $F_0$. Divide the points in $\mathcal{S}_0$ into consecutive intervals of length $C_2$ from left to right. Suppose that at least $|\mathcal{S}_0|/(C_2+1)$ of these intervals have at most $2$ points from $\mathcal{P}$ in them. Then, the total number of incidences of $\mathcal{P}$ with $\mathcal{S}_0$ is
\[\frac{2|\mathcal{S}_0|}{C_2+1} + \left(\frac{|\mathcal{S}_0|}{C_2} - \frac{|\mathcal{S}_0|}{C_2+1}\right) C_2 = \frac{3}{C_2+1}|\mathcal{S}_0|.\]
If $C_2$ is chosen to be sufficiently large, then this is a contradiction, since $|\mathcal{S}_0|=|\mathcal{S}| \lesssim n^{1/3}$ and each good curve has $\gtrsim n^{1/3}$ points on it by definition. Thus, there must be at least $\left(\frac{1}{C_2} - \frac{1}{C_2+1}\right)|\mathcal{S}_0|$ intervals of length $C_2$ containing at least $3$ points. Within each such interval, there is at least one triple meeting requirements ii) and iii) of the good triple definition. Therefore there are $\gtrsim n^{1/3}|\mathcal{F}'|$ good triples before cutting.\\

%This corresponds to $\gtrsim n^{1/3}$ good triples per good curve, and thus $ \gtrsim |\mathcal{F}'|n^{1/3}$ good triples before cutting.\\

Now, by our cell decomposition from Theorem \ref{cuttheo}, we know that we have $\le 20C_1^2n^{2/3}$ cells, and at most $\frac{|\mathcal{F}'|}{C_1n^{1/3}}$ curves from $\mathcal{F}'$ intersecting each cell. Thus, there are $\le |\mathcal{F}'|20C_1n^{1/3}$ curve-cellwall incidences, and each can destroy at most two good triples. Choosing $C_1$ to be sufficiently small shows that there are $\gtrsim n^{1/3}|\mathcal{F}'|$, since $|\mathcal{F}'|\gtrsim n$ the claim follows.

%We will first show that, before cutting, there are $\gtrsim |\mathcal{F}'|n^{1/3}$ good triples in $\mathcal{S} + \mathcal{T}$. Pick an element $F \in \mathcal{F}'$ and divide the points from $\mathcal{S}$ on $F$ into consecutive intervals of length $C_2$ from left to right. Suppose that at least $|\mathcal{S}|/(C_2+1)$ of these intervals have at most $2$ points from $\mathcal{P}$ in them. Then, the total number of points from $\mathcal{P}$ on $F$ is 
%$$2\frac{|\mathcal{S}|}{C_2+1} + \left(\frac{|\mathcal{S}|}{C_2} - %\frac{|\mathcal{S}|}{C_2+1}\right) C_2 = \frac{3}{C_2+1}|\mathcal{S}|.$$
%If $C_2$ is chosen to be sufficiently large, then this is a contradiction, since $|\mathcal{S}| \lesssim n^{1/3}$ and each good curve has $\gtrsim n^{1/3}$ points on it. Thus, there must be at least $\left(\frac{1}{C_2} - \frac{1}{C_2+1}\right)|\mathcal{S}|$ intervals of length $C$ containing at least $3$ points. This corresponds to $\gtrsim n^{1/3}$ good triples per good curve, and thus $ \gtrsim |\mathcal{F}'|n^{1/3}$ good triples before cutting.\\

%Now, by our cell decomposition from Theorem \ref{cuttheo}, we know that we have $\le 20C^2n^{2/3}$ cells, and at most $\frac{|\mathcal{F}'|}{Cn^{1/3}}$ curves from $\mathcal{F}'$ intersecting each cell. Thus, there are $\le |\mathcal{F}'|20Cn^{1/3}$ point-cell incidences, and each can destroy at most one good triple. Choosing $C$ to be sufficiently small gives the claim.

\end{proof}

\end{quote}

\begin{quote}

\begin{claim}\label{gccount} There are $\gtrsim n^{2/3}$ good cells. %Furthermore, within $\gtrsim n^{2/3}$ good cells, $\gtrsim n^{2/3}$ distinct curves of $\mathcal{F}'$ contribute at least one good triple. 

\end{claim}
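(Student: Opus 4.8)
The plan is to charge the $\gtrsim n^{4/3}$ good triples produced by Claim~\ref{gtclaim} to the cells of our cutting and to show that only a small fraction of them can lie in cells that fail to be good. For a cell $\sigma$ write $t(\sigma)$ for the number of good triples contained in $\sigma$, $m(\sigma) = |\mathcal{P} \cap \sigma|$, and $c(\sigma)$ for the number of good curves carrying a good triple that lies in $\sigma$; thus $\sigma$ is a good cell exactly when $c(\sigma) \ge C_3 n^{2/3}$ and $m(\sigma) \le C_4 n^{1/3}$. First I would record that a good cell contains few good triples: a good triple is determined by its leftmost and rightmost point, since by property (3) of a nice curve these two points lie on a unique translate of $F$, and condition (ii) in the definition of a good triple then forces the middle point; hence $t(\sigma) \le \binom{m(\sigma)}{2} \lesssim C_4^2 n^{2/3}$ for every good cell $\sigma$. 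So it suffices to show that the good cells together contain $\gtrsim n^{4/3}$ good triples.

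Next I would bound $t(\sigma)$ for an \emph{arbitrary} cell by an incidence count. Sending a good triple in $\sigma$ to the pair consisting of its middle point and the good curve carrying it is an injection into the incidences between $\mathcal{P} \cap \sigma$ and the $c(\sigma)$ good curves just described, since from the middle point $y_0$ and the curve $F_0$ one recovers the triple as the three consecutive points of $\mathcal{P} \cap \mathcal{S}_0$ along $F_0$ with $y_0$ in the middle. Two translates of $F$ meet at most once by property (1), so these curves are pseudo-lines and Szemer\'edi--Trotter (Theorem~\ref{szt}) gives $t(\sigma) \lesssim m(\sigma)^{2/3} c(\sigma)^{2/3} + m(\sigma) + c(\sigma)$. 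I will combine this with the three facts provided by the cutting (Theorem~\ref{cuttheo}): there are $\le 20 C_1^2 n^{2/3}$ cells; $c(\sigma) \lesssim n^{2/3}/C_1$ for every $\sigma$; and $\sum_\sigma m(\sigma) \le n$.

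Call $\sigma$ \emph{curve-poor} if $c(\sigma) < C_3 n^{2/3}$ and \emph{point-rich} if $m(\sigma) > C_4 n^{1/3}$; every cell that is neither is a good cell, so I only need to bound the number of good triples in curve-poor and in point-rich cells. For the curve-poor cells I substitute $c(\sigma) < C_3 n^{2/3}$ into the Szemer\'edi--Trotter bound and sum: by concavity of $t \mapsto t^{2/3}$, together with $\sum_\sigma m(\sigma) \le n$ and the bound $20 C_1^2 n^{2/3}$ on the number of cells, the sum of $m(\sigma)^{2/3}$ over these cells is $\lesssim (C_1^2 n^{2/3})^{1/3} n^{2/3}$, and altogether the curve-poor cells carry $\lesssim (C_1^{2/3} C_3^{2/3} + C_1^2 C_3)\, n^{4/3} + n$ good triples; this is less than a quarter of the good-triple total once $C_3$ is chosen small enough in terms of $C_1$ (which was already fixed in the proof of Claim~\ref{gtclaim}) and $n$ is large. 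For the point-rich cells, $\sum_\sigma m(\sigma) \le n$ forces there to be at most $n^{2/3}/C_4$ of them; substituting the uniform bound $c(\sigma) \lesssim n^{2/3}/C_1$ and summing over just these cells, again by concavity of $t \mapsto t^{2/3}$ but now over the $\le n^{2/3}/C_4$ point-rich cells, gives a total of $\lesssim (C_1^{-2/3} C_4^{-1/3} + C_1^{-1} C_4^{-1})\, n^{4/3} + n$ good triples, less than a quarter of the good-triple total once $C_4$ is chosen large enough in terms of $C_1$. Subtracting, the good cells contain $\gtrsim n^{4/3}$ good triples, and dividing by the per-cell bound $\lesssim C_4^2 n^{2/3}$ from the first paragraph yields $\gtrsim n^{2/3}$ good cells.

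The step I expect to be the genuine obstacle is the point-rich cells: the cutting controls only how many \emph{curves} enter a cell, not how many points, so a priori a single cell holding a large chunk of $\mathcal{P}$ could absorb a constant fraction of all the good triples and wreck the count. Applying Szemer\'edi--Trotter \emph{inside} each cell is precisely what prevents this, since it bounds $t(\sigma)$ in terms of $m(\sigma)$ and $c(\sigma)$ simultaneously; combined with the fact that there can be only $\le n^{2/3}/C_4$ point-rich cells, summing this bound over them recovers an $O(n^{4/3})$ estimate whose implied constant tends to $0$ as $C_4 \to \infty$. The other point that needs care is the order in which the constants are chosen: $C_1$ is fixed (small) already in Claim~\ref{gtclaim}, then $C_4$ is taken large, and only then is $C_3$ taken small.
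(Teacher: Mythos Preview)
Your argument is correct and follows essentially the same route as the paper: apply Szemer\'edi--Trotter inside each cell to bound the number of good triples by $\lesssim (m(\sigma)c(\sigma))^{2/3}+m(\sigma)+c(\sigma)$, then use H\"older together with $\sum_\sigma m(\sigma)\le n$, the cell count $\lesssim n^{2/3}$, and the cutting bound $c(\sigma)\lesssim n^{2/3}$ to show that curve-poor and point-rich cells each absorb only a small constant times $n^{4/3}$ good triples. The only cosmetic difference is the final step: you cap the good triples in a good cell by $\binom{m(\sigma)}{2}\lesssim C_4^2 n^{2/3}$ and divide, whereas the paper simply reapplies the same H\"older bound over the remaining (good) cells to read off $|I\setminus(J_1\cup J_2)|\gtrsim n^{2/3}$ directly; both are valid and neither needs the other.
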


\begin{proof} Let the cells have index set $I$, and $p_i$ be the number of points in cell $i\in I$ from $\mathcal{P}$, and $f_i$ the number of curves contributing at least one good triple to cell $i$. Each good triple corresponds to a curve-point incidence. Therefore by Szemer\'edi-Trotter (Theorem \ref{szt}) and Claim \ref{gtclaim} we have
\[ \sum_{i} \big((p_if_i)^{2/3}+p_i+f_i\big)  \gtrsim n^{4/3}. \]

Let $J \subset I$ be an index set. Since $\sum_i p_i \lesssim n$ and using H\" older's inequality
\begin{equation}\label{holderbound}
\sum_{j \in J} \big((p_if_i)^{2/3}+p_i+f_i \big) \lesssim  (\max_{j \in J} f_j)^{2/3}n^{2/3}|J|^{1/3}+n+(\max_{j\in J} f_j)|J|.
\end{equation}

Let $J_1$ be the indices of cells where $f_i \le C_3n^{2/3}$, and $J_2$ be the indices of cells with more than $C_4n^{1/3}$ points. For cells in $J_1$ we have $\max_{j\in J_1} f_j \le C_3n^{2/3}$, and so~\eqref{holderbound} is $\lesssim$ $C_3n^{4/3}$ for $J = J_1$. Furthermore, since we have $\leq n$ points, we see $|J_2| \leq (1/C_4)n^{2/3}$. Since $f_i \lesssim n^{2/3}$ for all $i\in I$ (by our cutting), \eqref{holderbound} is $\lesssim \frac{1}{C_4^{1/3}}n^{2/3}$ when $J=J_2$. Letting $C_3$ be sufficiently small and $C_4$ be sufficiently large, we see then 
$$(\max_{j \notin J_1\cup J_2} f_j)^{2/3}n^{2/3}|I \setminus (J_1 \cup J_2)|^{1/3}+n+(\max_{j \notin J_1\cup J_2} f_j)|I \setminus (J_1 \cup J_2)| \gtrsim n^{4/3}$$
From which we immediately conclude $|I \setminus (J_1 \cup J_2)| \gtrsim n^{2/3}$. This completes the claim.

%Note that this argument also shows that all good cells have $\gtrsim n^{1/3}$ points of $\mathcal{P}$, 

%and $\gtrsim n^{2/3}$ curves of $\mathcal{F}$ contribute at least one good triple. 

\end{proof}
\end{quote}

%\begin{quote}
%\begin{claim}\label{distinct} In a good cell there are $\gtrsim n^2$ good triples from $\gtrsim n^2$ distinct curves of $\mathcal{S}+\mathcal{T}$. 

%\end{claim}

%\begin{proof} By Szemer\'edi-Trotter, a cell with $P_i$ points and $L_i$ curves has at most %$(P_iL_i)^{2/3}$ good triples. 

%\end{proof}
%\end{quote}

For a single cell $D$, define the graph $G_D$, drawn in the plane as follows. Let the vertices be the points from $\mathcal{P} \cap (\mathcal{S}+\mathcal{T})$ in $D$. For each curve $F_0 \in \mathcal{F}'$ with least one good triple in $D$, choose exactly one such triple $\{x,y,z\}$, and let $\{x,y\}$, $\{x,z\}$ and $\{y,z\}$ be edges in $G_D$, where the edges are drawn along $F_0$. For any path of length 3 $\{e_1,e_2,e_3\}$ in $G_D$ we say the path is \emph{self-intersecting} if the curves corresponding to $e_1$ and $e_3$ intersect in $D$.

\begin{quote}
\begin{claim}\label{claimSP3} In a good cell there are $\gtrsim n^{4/3}$ self intersecting $P_3$'s. 

\end{claim}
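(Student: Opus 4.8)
The plan is to build an auxiliary graph on the points of $D$ whose triangles encode the curve structure, inflate it into many paths via the triangle removal lemma, and then force a positive proportion of those paths to self‑intersect using the geometry of the cell. For a good curve $F_0\in\mathcal F'$ with a chosen good triple $\{x,y,z\}$ in $D$, the three edges $\{x,y\},\{y,z\},\{x,z\}$ are all drawn along $F_0$; call this the \emph{mono‑curve triangle} of $F_0$. An edge of $G_D$ joins two points lying on a common translate of $F$, which by niceness~(2) is $x$‑monotone, so those two points are not on a common vertical line and hence by niceness~(3) determine a unique translate; thus each edge of $G_D$ lies on exactly one good curve, and the mono‑curve triangles of distinct good curves are edge‑disjoint. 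Writing $m:=|V(G_D)|$, a good cell has $m\le C_4n^{1/3}$ while $G_D$ has at least $C_3n^{2/3}$ pairwise edge‑disjoint triangles; since the number of edge‑disjoint triangles is at most $\binom m2/3$ we also get $m\gtrsim n^{1/3}$ (matching the remark after the definition of a good cell), so $m\asymp n^{1/3}$ and $G_D$ has $\gtrsim m^2$ edge‑disjoint triangles. Theorem~\ref{trirem} then gives $\gtrsim m^3$ triangles in $G_D$, and a routine cleaning (repeatedly delete a vertex of degree $<\delta m$; each deletion destroys $<\delta^2m^2/2$ triangles, hence fewer than $\delta^2m^3/2$ in total, so for a small absolute $\delta>0$ a constant fraction of the triangles survive) leaves an induced subgraph on $\gtrsim m$ vertices of minimum degree $\gtrsim m$; in particular $G_D$ has $\gtrsim m^4\asymp n^{4/3}$ copies of $P_3$, and most vertices lie on $\gtrsim m$ good curves. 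So the task is to show $\gtrsim m^4$ of these $P_3$'s are self‑intersecting.

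\emph{Geometric input.} Because $\partial D$ consists of at most two vertical segments together with at most two arcs of translates of $F$, and a translate of $F$ is $x$‑monotone and meets any other translate, and any vertical line, at most once, the intersection of any translate of $F$ with $D$ is a single arc. Hence for a good curve $\gamma$ with a triple in $D$, the set $\gamma\cap D$ is an arc whose $x$‑projection $J(\gamma)$ is an interval containing the $x$‑range of $\gamma$'s triple, and for good curves $\gamma,\gamma'$ one has $\gamma\cap\gamma'\in D$ exactly when the unique abscissa of $\gamma\cap\gamma'$ lies in $J(\gamma)\cap J(\gamma')$. Two translates of $F$ fail to intersect only when one is a vertical shift of the other; good curves sharing a horizontal shift are pairwise disjoint translates each carrying $\ge 3$ points of $\mathcal P\cap D$, so at most $m/3$ of them share any given horizontal shift, whence the number of non‑intersecting pairs of good curves is $\lesssim m^3$. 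Moreover the difference of two translates of a strictly convex curve is a monotone function of $x$, so off their unique crossing two good curves are comparable; consequently a pair of good curves crossing outside $D$ is — apart from the $\lesssim m^3$ vertical‑shift pairs — either comparable on the vertical slab spanned by $D$, or has its crossing in that slab but vertically outside $D$.

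\emph{Assembly and the obstacle.} For each middle edge $qr\in E(G_D)$, each good curve $\beta$ with $q$ in its triple, and each good curve $\delta$ with $r$ in its triple, if $\beta\cap\delta\in D$ then choosing an incidence of $q$ and of $r$ with the mono‑curve triangles of $\beta$ and $\delta$ produces a self‑intersecting $P_3$ with $e_1$ on $\beta$, $e_3$ on $\delta$, $e_2=\{q,r\}$; distinct choices give distinct $P_3$'s up to bounded multiplicity. The total number of such triples $(qr,\beta,\delta)$ is $\asymp m^4$ (using that most vertices lie on $\gtrsim m$ good curves after the cleaning), so it is enough to bound the ``bad'' triples, those with $\beta\cap\delta\notin D$, by $o(m^4)$: vertical‑shift pairs contribute $\lesssim m^3$, and the remaining bad triples should be controlled by showing that a large family of good curves pairwise comparable on the slab of $D$ would supply pairwise‑disjoint triples inside $D$, contradicting $|\mathcal P\cap D|\le C_4n^{1/3}$, and similarly for crossings pushed vertically out of $D$. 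I expect this localization to be the crux: it is easy that $G_D$ has $\gtrsim n^{4/3}$ paths and that $\gtrsim n^{4/3}$ pairs of good curves cross \emph{somewhere}, but ruling out that essentially all crossings compatible with the middle‑edge structure fall just outside $D$ requires the cell geometry — the single‑arc property and the monotonicity coming from strict convexity — to be played against the scarcity of points in $D$, with the side difficulty of letting $e_2$ range over enough middle edges without double counting.
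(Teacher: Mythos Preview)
Your setup through the triangle removal step matches the paper exactly: $G_D$ has $m\asymp n^{1/3}$ vertices, $\gtrsim m^2$ edge-disjoint mono-curve triangles, and hence $\gtrsim m^3\asymp n$ triangles by Theorem~\ref{trirem}. The divergence --- and the gap --- is everything after that. You pass to minimum degree $\gtrsim m$, produce $\gtrsim m^4$ copies of $P_3$, and then try to show that all but $o(m^4)$ of the candidate triples $(qr,\beta,\delta)$ have $\beta\cap\delta\in D$. But this last step is essentially the assertion that a $1-o(1)$ fraction of the $\asymp m^4$ pairs of good curves meeting $D$ actually cross \emph{inside} $D$, and you do not prove it. Your proposed mechanism (a large pairwise-comparable family forces too many disjoint triples) only bounds the clique number of the ``non-crossing-in-$D$'' graph on curves by $O(m)$; it says nothing about its edge count, which is what you need. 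The ``crossing in the slab but vertically outside $D$'' case is likewise left open. You yourself flag this localization as the crux, and indeed it is --- but it is not clear it is even true at the level of generality you are attempting.

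The paper sidesteps this entirely with a local observation you are missing. Rather than count all $P_3$'s and subtract, it manufactures self-intersecting $P_3$'s directly from \emph{pairs of triangles sharing an edge}. If $T_1,T_2$ are triangles of $G_D$ sharing an edge $e_2$ (lying on some curve $\gamma$), and their opposite vertices $v,v'$ lie on the same side of $\gamma$, then one of the mixed paths (e.g.\ $e_1',e_2,e_3$) is forced to self-intersect in $D$: the pseudoline property pins down the crossing using only the four points $a,b,v,v'\in D$ and the arc of $\gamma$ between $a$ and $b$, so no global control on where curve pairs meet is needed. The count is then pure Cauchy--Schwarz: with $t(e)$ the number of triangles on edge $e$, at least $\binom{\lceil t(e)/2\rceil}{2}$ same-side pairs occur at $e$, and
\[
\sum_{e}\binom{\lceil t(e)/2\rceil}{2}\ \gtrsim\ \frac{1}{|E(G_D)|}\Big(\sum_e t(e)\Big)^2\ \gtrsim\ \frac{n^2}{n^{2/3}}\ =\ n^{4/3}.
\]
So the idea you are lacking is precisely this shared-edge / same-side pigeonhole; once you have it, your min-degree cleaning and the attempt to bound bad $(qr,\beta,\delta)$ triples become unnecessary.
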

\begin{proof} Let $D$ be a good cell, and $G_D$ be as above. Note that since $D$ is a good cell, $G_D$ has $\lesssim n^{1/3} $ vertices. From Claim~\ref{gccount} we know that $G_D$ contains $\gtrsim n^{2/3}$ edge-disjoint triangles, so by Theorem \ref{trirem} $G_D$ contains $\gtrsim n$ triangles. Consider two triangles $T_1 = \{e_1,e_2,e_3\},T_2= \{e_1',e_2,e_3'\}$ in $G_D$ that share an edge. Let $v \in T_1$, $v'\in T_2$ be the vertices not on the edge $e_2$. If $v,v'$ are on the same side of the curve in $\mathcal{F}'$ that contains $e_2$, then a self-intersecting $P_3$ will be formed since curves can intersect at most once. See Figure~\ref{SP3} for illustration. \\

We can now complete the claim with a convexity argument. For each edge $e$ in $G_D$, let $t(e)$ be the number of triangles containing $e$. Then by the above, the number of intersecting $P_3$'s is at least 
\[ \sum_{e \in E(G_D)} \binom{\lceil t(e)/2 \rceil}{2} \gtrsim \frac{1}{|E(G_D)|}\bigg(\sum_{e \in E(G_D)} t(e)\bigg)^2 \gtrsim n^{4/3}.\]

%To complete the proof of the claim, we note simply that by Cauchy-Schwarz and the fact that our graph has $\gtrsim n^3$ triangles, there will be $\gtrsim n^4$ pairs of triangles that share an edge and have .

\end{proof}

\end{quote}

\begin{figure}[ht]
\begin{center}
\begin{tikzpicture}[scale=1]

\def\x{-1}

\filldraw

(-3,0) circle (2pt)
(3,0) circle (2pt)
(-1.5,1.5) circle(2pt)
(-1,3) circle(2pt);

\draw
(-3,0) to [out = 20, in = 160] (3,0)
(-3,0) to [out = 55, in = 215] (-1.5,1.5)
(-3,0) to [out = 80, in = 210] (-1,3)
(-1,3) to [out = 350, in = 120] (3,0);

\draw[dotted]
(-1.5,1.5) to [out = 35, in = 180] (1.94,2.57);

\draw
(0,0.25) node{$e_2$}
(-2.5,2) node{$e_1$}
(-2,0.8) node{$e_1'$}
(1.7,2.1) node{$e_3$};

\end{tikzpicture}
\end{center}
\caption{Self-intersecting $P_3$: $e_1',e_2,e_3$.}
\label{SP3}
\end{figure}
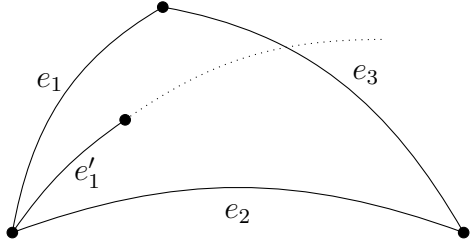

Now we put together the pieces. Define a graph $\mathcal{H}$ with vertex set $\mathcal{F}'$. For two curves $F_1,F_2 \in \mathcal{F}'$, we make $F_1\sim F_2$ an edge if there are edges $e_1 \in F_1$ and $e_3 \in F_2$ and a third edge $e_2$ such that $\{e_1,e_2,e_3\}$ are all contained in the same cell, and form a self-intersecting $P_3$. We now count the edges in $\mathcal{H}$. The total number of self-intersecting $P_3$'s is $\gtrsim n^2$ by Claim~\ref{claimSP3}. Curves in $\mathcal{F}'$ intersect at most once, and within each cell $D$ we select only 3 arcs from a single curve to become edges of $G_D$. Furthermore, a self-intersecting $P_3$ can be counted in at most one cell, so at most $9$ self-intersecting $P_3$'s correspond to the same edge, and so $\mathcal{H}$ has $\gtrsim n^2$ edges.  \\

%The edges in each cell graph $G_D$ correspond to $\lesssim n^{1/3}$ fixed arcs along our curve $F$. If $e_1,e_3$ are arcs in two curves $F_1,F_2$ of $\mathcal{S}+\mathcal{T}$, such that $e_1,e_2,e_3$ is self-intersecting for some arc $e_2$, then the translation vector difference between $F_1$ and $F_2$ is determined by $e_1,e_2,e_3$. It follows that $| \mathcal{T}-_\mathcal{H} \mathcal{T}|  \lesssim n$. By the Balog-Szemer\'edi-Gowers~\cite[Theorem 2.31]{TV} we obtain $\mathcal{T}'\subset \mathcal{T}$ of size $\gtrsim n$ such that $|\mathcal{T}'-\mathcal{T}'| \lesssim n$. Finally by Freiman's theorem~\cite[Theorem 5.33]{TV} we conclude that $\mathcal{T}'$ is contained in a generalized arithmetic progression of dimension $\lesssim 1$ and size $\lesssim n$. 

The edges in each cell graph $G_D$ correspond to $\lesssim n^{1/3}$ fixed arcs along our curve $F$. Supposing $F_1 \sim F_2$ in $\mathcal{H}$, there exist arcs $e_1,e_2$ along $F_1$ and $F_2$ (respectively) and a third arc $e_0$ such that $e_1,e_0,e_2$ is a self-intersecting $P_3$ in some good cell. Since the $e_i$ each come from the $\lesssim n^{1/3}$ fixed arcs, and the vector difference between $F_1$ and $F_2$ is determined by $e_0,e_1,e_2$, we see $|\mathcal{T}'-_\mathcal{H} \mathcal{T}'| \lesssim n$. By Theorem \ref{bsg} we obtain $\mathcal{T}''\subset \mathcal{T}'$ of size $\gtrsim n$ such that $|\mathcal{T}''-\mathcal{T}''| \lesssim n$. Finally, by Theorem \ref{frei} we conclude that $\mathcal{T}''$ is contained in a generalized arithmetic progression of dimension $\lesssim 1$ and size $\lesssim n$.

%The Balog-Szemer\'edi-Gowers and Freiman-Ruzsa theorems guarantee a subset $\mathcal{T}'\subset \mathcal{T}$ that is contained in a generalized arithmetic progression of dimension $\lesssim 1$ and size $\lesssim n^3$.

%\cite[Theorem 2.31]{TV}

%\cite[Theorem 5.33]{TV}

%Now, we put together the pieces. We wish to define a graph H with vertex set T . For S1,S2 ∈ T , we say that S1 ∼ S2 in H if S1,S2 correspond to e1,e3 in a self-intersecting P3 . We note two facts: first, since S1, S2 are circles they can intersect in at most 2 points. Furthermore, we count them only in cells where they intersect and thus they can be counted in at most 2 cells. Additionally, since any circle within a cell defines only 3 edges in GD , a given pair of circles within a given cell can be counted in only O(1) self-intersecting P3’s. Furthermore, by claims 3 and 5, there are Ω(n6) self-intersecting P3’s in all cells defined by Z(F), and so the graph H has Ω(n6) edges.
%Now, since the edges in the various GD come from the O(n) arcs from S, we know that there are O(n3) possible combinations of arcs that could make up such a self-intersecting P3. Finally, the arcs in question uniquely determine the difference between the centers of S1 and S2.
%Using this fact, we note that |T −H T | = O(n3). Thus, we my apply Balog-Szemere ́di-Gowers and Freiman-Ruzsa to find T ′ ⊂ T that is contained in a generalized arithmetic progression of dimension O(1) and size O(n3).

\end{proof}

\section{Cutting theorem for strictly convex curves}\label{cutsec}

% do we need convex? or just the following pseudoline properties? (which are maybe equivalent to being strictly convex)

%Let $F$ denote a strictly convex curve in $\mathbb{R}^2$ with the following properties.
%\begin{enumerate}[(1)] 

%\item Any two translates of $F$ intersect exactly once, unless one is a vertical shift of the other, in which case they do not intersect at all.

%\item For any pair of points in $\mathbb{R}^2$ there is a unique translate of $F$ that passes through them both. 

%\item $F$ is $x$-monotone; every vertical line intersects $F$ exactly once.

% probably these can be replaced by their 'generic' equivalents

%\end{enumerate}

In this section we prove Theorem~\ref{cuttheo} by adapting an argument of Matou\v{s}ek for the analogous theorem on lines~\cite[\S 4.7]{MAT}. Let $n,F,$ and $\mathcal{F}$ be as in the statement of Theorem~\ref{cuttheo}. By applying a small perturbation to $\mathcal{F}$, we may assume that $\mathcal{F}$ is in general position: all pairs of curves will intersect and there will be no points of triple intersection. Our actual cutting is then given by a limit of these cuttings as the size of the perturbation goes to zero.\\

We will call the points of intersection between curves in $\mathcal{F}$ vertices, and the vertex-free open arc-segments of curves in $\mathcal{F}$ \emph{edges}. Note that some edges will have finite length, and others will be unbounded towards the right or left.\\

%We will call the intersection-free open arc-segments of curves in $L$ \emph{edges}. Note that some edges will have finite length, and others will be unbounded towards the right or left.

 %Let $L$ be a finite set of $n$ translates of $F$ in the plane. We will call the intersection-free open arc-segments of curves in $L$ \emph{edges}. Note that some edges will have finite length, and others will be unbounded towards the right or left. 
 
 %Define an \emph{edge} as the open arc-segment of a translate in $L$ between two consecutive intersection points, 
 
 %We will call the open arc-segments of translates in $L$ \emph{edges}. Note that some edges will have finite length, and others will be unbounded towards the right or left. 

 %Define an \emph{edge} as the open arc-segment of a translate in $L$ between two consecutive intersection points.
 
The \emph{level} of a point $x \in \mathbb{R}^2$ is the number of curves in $\mathcal{F}$ lying strictly below $x$. Note that the points on any edge have the same level. For each $0 \leq k \leq n$, define the \emph{level $k$} of $\mathcal{F}$ as the set of edges with level $k$, along with their endpoints. Denote the set of edges in level $k$ by $E_k$.\\

Fix a pair of points $x,y \in \mathbb{R}^2$ and let $F_{x,y}$ denote the arc-segment between $x,y$ of the unique translate of $F$ containing both $x$ and $y$. Let the edges in $E_k$ be $e_0,e_1,\ldots,e_t$, and note that $e_0$ and $e_t$ are unbounded to the left and right, respectively. Choose arbitrary points $p_i \in e_i$ for $0 \leq i \leq t$, and fix a parameter $q \geq 2$. Define the $q$\emph{-simplification of level $k$} as union of arcs 
\[ F_{p_0,p_q}, F_{p_q,p_{2q}} ,\ldots, F_{p_{\lfloor (t-1)/q \rfloor q} p_t},\]
in addition to the part of $e_0$ to the left of $p_0$, and the part of $e_t$ to the right of $p_t$. Note that $q$-simplification of level $k$ is an $x$-monotone curve, and that it consists of at most $t/q+3$ arc-segments.

%Denote the $q$-simplification of level $k$ by $E^{(q)}_k$. Note that $E^(q)_k$ is an $x$-monotone curve, and that it consists of at most $t/q+2$ arc-segments.

\begin{lemma}\label{simpprop} \

\begin{enumerate}[(i)]

\item The portion $\Pi$ of the level $k$ between $p_j$ and $p_{j+q}$ is intersected by at most $q+1$ translates in $\mathcal{F}$.

\item The arc $F_{p_j,p_{j+q}}$ is intersected by at most $q+1$ translates in $\mathcal{F}$.

\item The $q$-simplification of level $k$ is contained in the strip between levels $k-\lceil q/2 \rceil$ and $k+\lceil q/2 \rceil$.

\end{enumerate}

\end{lemma}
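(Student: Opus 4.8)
The plan is to prove each of the three parts of Lemma~\ref{simpprop} essentially by counting level-crossings, using the defining property of a nice curve that any two of its translates meet in at most one point (equivalently, that crossing the level up or down ``uses up'' one curve). First, for part (i): the portion $\Pi$ of level $k$ between $p_j$ and $p_{j+q}$ is a connected piece of the level consisting of $q$ consecutive edges $e_j, e_{j+1}, \dots, e_{j+q}$. Passing from one edge to the next along the level happens at a vertex, which is an intersection point of two curves in $\mathcal{F}$; at each such vertex exactly one new curve in $\mathcal{F}$ becomes relevant (the one whose arc we begin to traverse), and the curves carrying the $q+1$ edges of $\Pi$ are the only curves of $\mathcal{F}$ that can actually meet the \emph{open} arcs making up $\Pi$. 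More carefully: if some translate $G \in \mathcal{F}$ crosses $\Pi$ at an interior point of an edge $e_i$ ($j \le i \le j+q$), then since $G$ meets the curve carrying $e_i$ at most once and the level is locally the lower envelope-type object, one checks that the level must ``switch onto'' $G$ or leave $G$ there, forcing $G$ to carry one of the edges $e_{j}, \dots, e_{j+q}$. Hence at most $q+1$ translates meet $\Pi$.

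For part (ii), the key geometric input is that $F_{p_j, p_{j+q}}$ and $\Pi$ share endpoints $p_j \in e_j$ and $p_{j+q} \in e_{j+q}$, and that both are $x$-monotone arcs of translates of the nice curve $F$ (here using that $\Pi$, being a piece of a level, is $x$-monotone by the monotonicity condition (2) in the definition of nice, and $F_{p_j,p_{j+q}}$ is by definition an arc of one translate). Two $x$-monotone arcs with common endpoints bound a region, and because translates of $F$ pairwise cross at most once, any translate $G$ meeting the interior of $F_{p_j,p_{j+q}}$ must either (a) also meet $\Pi$, or (b) enter and leave the bounded region through $F_{p_j,p_{j+q}}$ — but a single translate can cross $F_{p_j,p_{j+q}}$ in at most one point (two translates of a strictly convex curve meet at most once, and $F_{p_j,p_{j+q}}$ lies on one translate), so option (b) cannot close up. Therefore every translate meeting the open arc $F_{p_j,p_{j+q}}$ also meets $\Pi$, and we are done by part (i). I expect this ``$F_{p_j,p_{j+q}}$ meets $\Pi$ or is trapped'' dichotomy to be the main obstacle: making the region-and-trapping argument fully rigorous requires care about unboundedness of $e_0$ and $e_t$ and about the orientation of the two arcs, and is where a clean picture (as in Matou\v{s}ek's lines argument) must be translated faithfully to convex translates.

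For part (iii), I would argue pointwise: take any point $w$ on $F_{p_j, p_{j+q}}$; its level $\ell(w)$ (number of translates of $\mathcal{F}$ strictly below $w$) differs from $k$ by the net number of translates one crosses moving along a vertical segment from $w$ to a point of $\Pi$ directly below or above it — but actually it is cleaner to move along $F_{p_j,p_{j+q}}$ itself from the endpoint $p_j$ (which lies on $e_j$ at level $k$, since $e_j \subset E_k$). As $w$ ranges over $F_{p_j,p_{j+q}}$, the level changes only when $F_{p_j,p_{j+q}}$ crosses a translate in $\mathcal{F}$, and by part (ii) there are at most $q+1$ such translates; moreover a translate crossed once can raise or lower the level by exactly $1$. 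Starting at level $k$ at $p_j$, after at most $q+1$ crossings the level lies in $[k-(q+1), k+(q+1)]$. To tighten this to $[k-\lceil q/2\rceil, k+\lceil q/2\rceil]$, note that the endpoint $p_{j+q}$ is also at level $k$, so the net level change around the whole arc is $0$; hence the number of ``up'' crossings equals the number of ``down'' crossings, so each is at most $\lfloor (q+1)/2 \rfloor = \lceil q/2 \rceil$ (using $q \ge 2$), and the level at any intermediate point, being at most the number of up-crossings above $k$ and at most the number of down-crossings below $k$, stays within $\pm \lceil q/2\rceil$ of $k$. This parity/balancing observation is the one genuinely new ingredient beyond parts (i)–(ii), but it is short.
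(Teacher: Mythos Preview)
Your proposal is correct and follows essentially the same approach as the paper's proof. For (i) you identify that only the $q+1$ curves carrying the edges of $\Pi$ can meet it; for (ii) you use the same region-trapping argument (the paper phrases it as ``to leave the cell it must intersect $\Pi$''); and for (iii) both you and the paper track level changes along $F_{p_j,p_{j+q}}$ using that the endpoints are at level $k$ and there are at most $q+1$ crossings --- the paper says ``to move from level $k$ to $k\pm i$ and back to $k$, at least $2i$ curves must cross,'' which is equivalent to your up/down balancing. One small slip: in (i) you write ``$q$ consecutive edges $e_j,\dots,e_{j+q}$'' but immediately (and correctly) count them as $q+1$.
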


\begin{proof} (i) any curve of $\mathcal{F}$ intersecting $\Pi$ must belong to the curves comprising $\Pi$, otherwise the level would change along $\Pi$. There are at most $q+1$ distinct curves comprising $\Pi$. (ii) The union of $F_{p_j,p_{j+q}}$ and $\Pi$ divides $\mathbb{R}^2$ into a series of bounded cells. If an element of $\mathcal{F}$ intersects $F_{p_j,p_{j+q}}$, then in order to leave the cell it must intersect $\Pi$ as well. Since at most $q+1$ curves from $\mathcal{F}$ intersect $\Pi$, we are done. (iii) We consider how high or low the level can become moving left to right along $F_{p_j,p_{j+q}}$. The level begins and ends at $k$. Each change of level must be accompanied by a curve in $\mathcal{F}$ intersecting $F_{p_j,p_{j+q}}$. Hence to move from level $k$ to $k\pm i$ and back to $k$, at least $2i$ curves of $\mathcal{F}$ must cross $F_{p_j,p_{j+q}}$. It follows that $2i \leq q+1$.

\end{proof}

\begin{proof}[Proof of Theorem~\ref{cuttheo}] If $r \geq n/4$, then we can form the required cutting by choosing all curves in $\mathcal{F}$. We will now assume $11 \leq r<n/4$. Set $q = \lfloor n/3r \rfloor-1 $. The total number of edges in $\mathcal{F}$ is $n^2$. We may find $0 \leq i \leq q-1$ such that the number of edges in $\cup_j E_{jq+i}$ is at most $n^2/q$. Let $P_j$ be the $q$-simplification of $E_{jq+i}$ for $0 \leq j \leq (n-1)/q$. Let $m_j$ denote the number of edges in $E_{jq+i}$. Then the total number of edges among all $P_j$ is at most 
\[ \sum_{0 \leq j \leq (n-1)/q} \left( m_j/q + 3 \right) \leq \frac{n^2}{q^2}+ \frac{3n}{q}.\]

%\lesssim N^2/q^2.\]
% n^2/q + 3n/q

Note that no two $q$-simplifications intersect. If they did, then a vertex from some $P_j$ would lie above $P_{j+1}$, but all vertices of $P_j$ have level $jq+i$, which contradicts Lemma~\ref{simpprop} (iii). \\

The union of the $P_j$ for $0 \leq j \leq (n-1)/q$ will form our decomposition, along with the additional vertical line segments. For every vertex in $P_j$, extend vertical lines up and down until they reach $P_{j+1}$ and $P_{j-1}$. Each vertical line segment creates an additional cell and so we have obtained a partition of the plane into at most $2n^2/q^2+7n/q+2< 20r^2$ cells. \\

Now we verify that at most $n/r$ curves of $\mathcal{F}$ enter any single cell. A typical cell $D$ will be between a pieces of $P_j$ and $P_{j+1}$ for some $0 \leq j \leq (n-1)/q$ and also bounded by two vertical segments. By Lemma~\ref{simpprop} (iii) we know $D$ lies between levels $jq+i-\lceil q/2 \rceil$ and $(j+1)q+i+\lceil q/2 \rceil$. Hence the vertical line segments bounding the set can be intersected by at most $2q+1$ curves in $\mathcal{F}$. The upper and lower boundaries of $D$ are arcs as in Lemma~\ref{simpprop} (ii) and so are intersected by at most $q+1$ curves in $\mathcal{F}$. Since a curve entering $D$ must intersect the boundary twice, we conclude that at most $3q+2$ curves of $\mathcal{F}$ intersect $D$. There are also atypical cells below $P_0$, above $P_{\lfloor (n-1)/q \rfloor}$, or bounded by a single vertical segment, but such cells are easily verified to have less than $3q$ curves of $\mathcal{F}$ intersecting their interior. Since $3q+2<n/r$ we have proved the result.

\end{proof}

%\section*{Acknowledgments}

\end{document}